\theoremstyle{plain}
\newtheorem{thm}{Theorem}
\newtheorem{lem}[thm]{Lemma}
\newtheorem{cor}[thm]{Corollary}
\theoremstyle{definition}
\newtheorem{remark}[thm]{Remark}
\newcommand{\nat}{\ensuremath {\mathbb N} }
\newcommand{\remove}[1] {}
\newcommand{\ex} {{\bf E}}
\newcommand{\pr} {{\bf Pr}}
\newcommand{\sG} {\ensuremath{\mathscr G}}
\newcommand{\eps}{\varepsilon}
\DeclareMathOperator{\Bin}{Bin}
\title{On-line list colouring of random graphs}
\date{}
\author{Alan Frieze}
\address{Department of Mathematical Sciences, Carnegie Mellon University, 5000 Forbes Av., 15213, Pittsburgh, PA, U.S.A}
\thanks{The first author is supported in part by NSF Grant CCF0502793}
\email{alan@random.math.cmu.edu}
\author{Dieter Mitsche}
\address{Universit\'{e} de Nice Sophia-Antipolis, Laboratoire J-A Dieudonn\'{e}, Parc Valrose, 06108 Nice cedex 02}
\email{\texttt{dmitsche@unice.fr}}
\author{Xavier P\'{e}rez-Gim\'{e}nez}
\address{Department of Mathematics, Ryerson University, Toronto, ON, Canada}
\email{\texttt{xperez@ryerson.ca}}
\author{Pawe\l{} Pra\l{}at}
\address{Department of Mathematics, Ryerson University, Toronto, ON, Canada}
\thanks{The fourth author is supported in part by NSERC}
\email{\tt pralat@ryerson.ca}
\begin{document}
\maketitle

\begin{abstract}
In this paper, the on-line list colouring of binomial random graphs $\sG(n,p)$ is studied. We show that the on-line choice number of $\sG(n,p)$ is asymptotically almost surely asymptotic to the chromatic number of $\sG(n,p)$, provided that the average degree $d=p(n-1)$ tends to infinity faster than $(\log \log n)^{1/3} (\log n)^2 n^{2/3}$. For sparser graphs, we are slightly less successful; we show that if $d \ge (\log n)^{2+\eps}$ for some $\eps>0$, then the on-line choice number is larger than the chromatic number by at most a multiplicative factor of $C$, where $C \in [2,4]$, depending on the range of $d$. Also, for $d=O(1)$, the on-line choice number is by at most a multiplicative constant factor larger than the chromatic number.
\end{abstract}

\section{Introduction} 

The combinatorial game we study in the paper is played by two players, named Mr.\ Paint and Mrs.\ Correct, and is played on a finite, undirected graph in which each vertex has assigned a non-negative number representing the number of erasers at the particular vertex. We assume for simplicity that this number is initially the same for each vertex. In each round, first Mr.\ Paint selects a subset of the vertices and paints them all the same colour; he cannot use this colour in subsequent rounds. Mrs.\ Correct then has to erase the colour from some of the vertices in order to prevent adjacent vertices having the same colour. Whenever the colour at a vertex is erased, the number of erasers at that vertex decreases by $1$, but naturally, Mrs.\ Correct cannot erase the colour if she has no erasers left at that vertex. Vertices whose colours have not been erased can be considered as being permanently coloured and can be removed from the game. The game has two possible endings: (i) all vertices have been permanently coloured, in which case Mrs.\ Correct wins, or (ii) at some point of the game, Mr.\ Paint presents two adjacent vertices $u$ and $v$ and neither $u$ nor $v$ has any eraser left, in which case Mr.\ Paint wins. If, regardless of which sets she gets presented, there is a strategy for Mrs.\ Correct to win the game having initially $k-1$ erasers at each vertex, we say that the graph is \textbf{$k$-paintable}. The smallest $k$ for which the graph is $k$-paintable is called the \textbf{paintability} number of $G$, and denoted by $\chi_P(G)$. Note that this parameter is indeed well defined: for any graph on $n$ vertices, $n-1$ erasers at each vertex always guarantee Mrs.\ Correct to win, as she can always choose one vertex from a set presented to her and erase colours on the remaining ones. This problem is also known as the \textbf{on-line list colouring} and the corresponding graph parameter is also called the \textbf{on-line choice number of $G$}---see, for example,~\cite{Xuding} and below for the relation to the (off-line) list colouring.

\bigskip

Let us recall a classic model of random graphs that we study in this paper. The \textbf{binomial random graph} $\sG(n,p)$ is defined as the probability space $(\Omega, \mathcal{F}, \Pr)$, where $\Omega$ is the set of all graphs with vertex set $\{1,2,\dots,n\}$, $\mathcal{F}$ is the family of all subsets of $\Omega$, and for every $G \in \Omega$,
$$
\Pr (G) = p^{|E(G)|} (1-p)^{{n \choose 2} - |E(G)|} \,.
$$
This space may be viewed as the set of outcomes of ${n \choose 2}$ independent coin flips, one for each pair $(u,v)$ of vertices, where the probability of success (that is, adding edge $uv$) is $p.$ Note that $p=p(n)$ may (and usually does) tend to zero as $n$ tends to infinity.  

All asymptotics throughout are as $n \rightarrow \infty $ (we emphasize that the notations $o(\cdot)$ and $O(\cdot)$ refer to functions of $n$, not necessarily positive, whose growth is bounded; whereas $\Theta(\cdot)$ and $\Omega(\cdot)$ always refer to positive functions). We say that an event in a probability space holds \textbf{asymptotically almost surely} (or \textbf{a.a.s.}) if the probability that it holds tends to $1$ as $n$ goes to infinity. We often write $\sG(n,p)$ when we mean a graph drawn from the distribution $\sG(n,p)$.  Finally, for simplicity, we will write $f(n) \sim g(n)$ if $f(n)/g(n) \to 1$ as $n \to \infty$ (that is, when $f(n) = (1+o(1)) g(n)$). 

\bigskip

Now, we will briefly mention the relation to other known graph parameters. A \textbf{proper colouring} of a graph is a labelling of its vertices with colours such that no two adjacent vertices have the same colour. A colouring using at most $k$ colours is called a (proper) \textbf{$k$-colouring}. The smallest number of colours needed to colour a graph $G$ is called its \textbf{chromatic number}, and it is denoted by $\chi(G)$. Let $L_k$ be an arbitrary function that assigns to each vertex of $G$ a list of $k$ colours. We say that $G$ is \textbf{$L_k$-list-colourable} if there exists a proper colouring of the vertices such that every vertex is coloured with a colour from its own list. A graph is \textbf{$k$-choosable}, if for every such function $L_k$, $G$ is $L_k$-list-colourable. The minimum $k$ for which a graph is $k$-choosable is called the \textbf{list chromatic number}, or the \textbf{choice number}, and denoted by $\chi_L(G)$. Since the choices for $L_k$ contain the special case where each vertex is assigned the list of colours $\{1, 2, \ldots,k\}$, it is clear that a $k$-choosable graph has also a $k$-colouring, and so $\chi(G) \leq \chi_L(G)$. It is also known that if a graph is $k$-paintable, then it is also $k$-choosable~\cite{Schauz}, that is, $\chi_L(G) \leq \chi_P(G)$. Indeed, if there exists a function $L_k$ so that $G$ is not $L_k$-list-colourable, then Mr.\ Paint can easily win by fixing some permutation of all colours present in $L_k$ and presenting at the $i$-th step all vertices containing the $i$-th colour of the permutation on their lists (unless the vertex was already removed before). Finally, it was shown in~\cite{Xuding} that the paintability of a graph $G$ on $n$ vertices is at most $\chi(G) \log n + 1$. (All logarithms in this paper are natural logarithms.) Combining all inequalities we get the following:
\begin{equation}\label{eq:chi}
\chi(G) \leq \chi_L(G) \le \chi_P(G) \le \chi(G) \log n + 1.
\end{equation}

\bigskip

It follows from the well-known results of Bollob\'as~\cite{Bol88}, \L uczak~\cite{Luc91} (see also McDiarmid~\cite{McD}) that the chromatic number of $\sG(n,p)$ a.a.s.\ satisfies
\begin{equation}\label{eq:chi2}
\chi(\sG(n,p)) \sim \frac {\log(1/(1-p))n}{2 \log (np)},
\end{equation}
for $np\to\infty$ and $p$ bounded away from $1$. The study of the choice number of $\sG(n,p)$ was initiated in~\cite{Alon92}, where Alon proved that a.a.s., the choice number of $\sG(n, 1/2)$ is $o(n)$. Kahn then showed (see~\cite{AlonS}) that a.a.s.\ the choice number of $\sG(n,1/2)$ equals $(1+o(1))\chi_{\sG(n,1/2)}$. In~\cite{Krivelevich}, Krivelevich showed that this holds for $p \gg n^{-1/4}$, and Krivelevich, Sudakov, Vu, and Wormald~\cite{KSVW02} improved this to $p \gg n^{-1/3}$. On the other hand, Alon, Krivelevich, Sudakov~\cite{AKS97} and Vu~\cite{Vu} showed that for any value of $p$ satisfying $2 < np \leq n/2$, the choice number is $\Theta(np/\log(np))$. Later, Krivelevich and Vu~\cite{KV02a} generalized this to hypergraphs; they also improved the leading constants and showed that the choice number for $C \leq np \leq 0.9n$ (where $C$ is a sufficiently large constant) is at most a multiplicative factor of $2+o(1)$ away from the chromatic number, the best known factor for $p \leq n^{-1/3}$. Our results below (see Theorem~\ref{thm:main}, Theorem~\ref{thm:main2}, and Theorem~\ref{thm:main3}) show that even for the on-line case, for a wide range of $p$, we can asymptotically match the best known constants of the off-line case. Moreover, if $np \ge \log^{\omega} n$ (for some function $\omega=\omega(n)$ tending to infinity as $n \to\infty$), then we get the same multiplicative factor of $2+o(1)$.

\bigskip

Our main results are the following theorems. The first one deals with dense random graphs.

\begin{thm}\label{thm:main}
Let $\eps > 0$ be any constant, and suppose that 
$$
(\log \log n)^{1/3} (\log n)^2 n^{-1/3} \ll p \leq 1-\eps.
$$ 
Let $G \in \mathcal{G}(n,p)$. Then, a.a.s., 
$$
\chi_P(G) \sim \frac {n}{2 \log_b (np)} \sim \chi(G),
$$
where $b = 1/(1-p)$.
\end{thm}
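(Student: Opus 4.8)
The plan is to prove separately that $\chi_P(G) \ge (1-o(1))\,n/(2\log_b(np))$ and $\chi_P(G) \le (1+o(1))\,n/(2\log_b(np))$; combined with~\eqref{eq:chi2} this yields $\chi_P(G) \sim n/(2\log_b(np)) \sim \chi(G)$. The lower bound is immediate: by~\eqref{eq:chi} we have $\chi_P(G) \ge \chi(G)$, and by~\eqref{eq:chi2} a.a.s.\ $\chi(G) \sim \log(1/(1-p))\,n/(2\log(np))$, which equals $n/(2\log_b(np))$ since $\log_b(np) = \log(np)/\log b$ with $b = 1/(1-p)$. All the work is in the upper bound. Fixing an arbitrarily small constant $\eta > 0$, it suffices to show that a.a.s.\ $G$ is $K$-paintable with $K := \lceil (1+\eta)\,n/(2\log_b(np)) \rceil$, and then to let $\eta \to 0$ along a suitable sequence. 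Set $d = p(n-1)$ and $k_0 := \lceil (2-\eta/2)\log_b(np) \rceil$.

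The first step is a structural description of $\sG(n,p)$. I would show that a.a.s.\ $G$ satisfies all of: $\Delta(G) \le (1+o(1))d$; $\alpha(G[W]) \ge k_0$ for every $W \subseteq V(G)$ that is not too small (more generally $\alpha(G[W]) \ge (2-o(1))\log_b(|W|p)$ for every $W$ down to some size threshold, using that $\log(|W|p) = (1-o(1))\log(np)$ throughout, since $\log(np) = \Theta(\log n) \gg \log\log n$ under the hypothesis on $p$); and a ``local sparseness'' estimate controlling, for each $s$, the number of edges induced by any $s$-element vertex set. The statement about independence numbers of induced subgraphs is the crux: for fixed $W$ with $|W| = w$, a Janson-type inequality bounds $\Pr\big[\alpha(G[W]) < (2-o(1))\log_b(wp)\big]$ by $\exp\!\big(-w^{2-o(1)}p^{2-o(1)}\big)$, and a union bound over the $\binom{n}{w}$ sets $W$ of size $w$ succeeds precisely when $p \gg (\log\log n)^{1/3}(\log n)^2 n^{-1/3}$; this is the single place at which the hypothesis on $p$ enters, and it is the on-line analogue of the $p \gg n^{-1/3}$ threshold of the off-line results quoted in the introduction.

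Mrs.\ Correct --- who knows $G$ and starts with $K-1$ erasers at each vertex --- then plays as follows. She tracks the set $R$ of still-uncoloured vertices; when Mr.\ Paint presents a set $S$ with a fresh colour she permanently colours an independent set $I \subseteq S \cap R$ that is essentially of maximum size $\alpha(G[S \cap R])$ --- hence of size at least $k_0$ whenever $|S\cap R|$ exceeds the threshold of the structural lemma --- but chosen, among all such near-maximum independent sets, so as to keep those vertices of $S \cap R$ that have so far lost the most erasers; the other vertices of $(S\cap R)\setminus I$ each lose one eraser. (Her move is legal so long as no two adjacent vertices with no erasers left are presented together; ruling this out is part of the analysis.) Two things must then be verified. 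First, the total number of erasures, summed over all vertices and rounds, is at most $(1+o(1))\,\tfrac{\log b}{2}\int_0^n \tfrac{x}{\log(xp)}\,dx = (1+o(1))\,\tfrac{n^2}{4\log_b(np)}$, the worst case being that Mr.\ Paint presents all of $R$ at every round, in which case $|R|$ shrinks by about $2\log_b(|R|p)$ per round; hence the average vertex loses fewer than $\tfrac12 (K-1)$ erasers. Second --- this is the delicate part --- the ``keep the most-eroded vertices'' rule, together with the structural properties, must be shown to spread the erasures evenly enough that \emph{no} vertex ever loses more than $K-1$ of its erasers, so that Mrs.\ Correct never loses and $G$ is $K$-paintable.

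The main obstacle is precisely this second verification: passing from a bound on the average number of erasures per vertex to a bound on the maximum. Heuristically it should go through because the only configurations that could force a single vertex to shed many erasers are dense bipartite-like sub-structures --- these are exactly what makes the on-line choice number exceed the chromatic number (witness $K_{k,k}$), and the factor $\log n$ in~\eqref{eq:chi} is the cost of tolerating them --- and the structural lemma asserts that $\sG(n,p)$ a.a.s.\ contains none large enough to matter; turning this heuristic into a strategy whose erasure budget can be checked round by round, particularly in the endgame when $R$ is small and the structural lemma provides less, is the technical heart of the proof. A secondary difficulty is nailing down the exact Janson-type concentration estimate and confirming that the stated lower bound on $p$ is exactly what it requires.
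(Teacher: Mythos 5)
Your lower bound argument (via \eqref{eq:chi} and \eqref{eq:chi2}) matches the paper exactly, and the structural lemma you describe --- $\alpha(G[W]) \ge (2-o(1))\log_b(|W|p)$ for all sufficiently large $W$, with the union bound succeeding precisely when $p \gg (\log\log n)^{1/3}(\log n)^2 n^{-1/3}$ --- is essentially Corollary~\ref{cor:largesets}, which the paper derives from the result of Krivelevich, Sudakov, Vu, Wormald~\cite{KSVW02} rather than proving a fresh Janson-type bound, but that is a superficial difference. The genuine gap is in the upper bound strategy, and you have correctly identified it yourself: you cannot pass from a bound on the \emph{total} number of erasures to a bound on the \emph{per-vertex maximum}, and your ``keep the most-eroded vertices'' rule is offered only as a heuristic whose analysis you do not supply, particularly in the endgame where the structural lemma gives nothing.

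The paper sidesteps this difficulty entirely by a different decomposition, and the argument is both simpler and sharper than the greedy-balancing plan you sketch. Presented sets are classified by size into \textbf{large} ($|S| \ge n/(\omega\log^2 n)$), \textbf{small} ($|S| \le np/(\omega\log^2 n)$), and \textbf{medium} (in between). For large sets the per-vertex bound is immediate by a counting argument you did not exploit: each large set, by Corollary~\ref{cor:largesets}, contains an independent set of size $k_0 \sim 2\log_b(np)$ which Mrs.\ Correct colours permanently, so only $n/k_0 = (1+o(1))\,n/(2\log_b(np))$ large sets can ever be presented, and therefore every vertex loses at most that many erasers to large sets --- no averaging or balancing across vertices is needed. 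For small and medium sets she plays a \emph{randomized} strategy (pick a uniformly random vertex in a small set; pick a random independent set from a partition provided by Lemma~\ref{lem:mediumsets} in a medium set); the point is that a vertex $v$ appearing in such a set is permanently coloured with probability at least $\Omega(\omega\log^2 n/(np))$, so the number of erasures $v$ accrues from these sets before being coloured is stochastically dominated by a geometric variable, and is a.a.s.\ $o(n/\log_b(np))$ uniformly over all vertices by a union bound. Thus the contribution of small and medium sets is negligible and only large sets matter. Your proposal has the right ingredients for the structural side but lacks this size-trichotomy and the randomized strategy for the sub-large sets, which is what actually closes the argument.
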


Note that if $p=o(1)$, then 
$$
\frac {n}{2 \log_b (np)} = \frac {n \log (1/(1-p))}{2 \log (np)} \sim \frac {np}{2 \log (np)} = \Theta \left( \frac {np}{\log (np)} \right).
$$
For constant $p$ it is not true that $\log (1/(1-p)) \sim p$ but the order is preserved, provided $p\leq 1-\eps$ for some $\eps>0$.

\bigskip

For sparser graphs we are less successful in determining the asymptotic behaviour of $\chi_P(\mathcal{G}(n,p))$. Nevertheless, we can prove the following two theorems that determine the order of the graph parameter we study.

\begin{thm}\label{thm:main2}
Let $\eps > 0$ be any constant, and suppose that 
$$
\frac {(\log n)^{2+\eps}}{n} \leq p = O((\log \log n)^{1/3} (\log n)^2 n^{-1/3}).
$$

Let $G \in \mathcal{G}(n,p)$. Then, a.a.s., 
$$
\chi_P(G)=\Theta \left( \frac {np}{\log(np)} \right) =\Theta(\chi(G)).
$$
Moreover, if $np = (\log n)^{C+o(1)}$, a.a.s.\
$$
\chi(G) \le \chi_P(G) \le (1+o(1))
\begin{cases}
2  \chi(G) & \text{if } C \to \infty \\
\frac {2C}{C-2} \chi(G) & \text{if } C \in [4, \infty) \\
4  \chi(G) & \text{if } C \in (2,4).
\end{cases}
$$
\end{thm}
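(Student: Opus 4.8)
The lower bounds need no work: by \eqref{eq:chi} and \eqref{eq:chi2}, a.a.s.
\[
\chi_P(G)\ \ge\ \chi_L(G)\ \ge\ \chi(G)\ =\ (1+o(1))\,\frac{\log(1/(1-p))\,n}{2\log(np)}\ =\ (1+o(1))\,\frac{np}{2\log(np)},
\]
using $p=o(1)$ throughout the stated range. This already gives both $\chi_P(G)=\Omega(np/\log(np))$ and the inequality $\chi(G)\le\chi_P(G)$ of the ``moreover'' part, so everything is contained in the matching upper bound $\chi_P(G)=O(np/\log(np))$; the first assertion of the theorem follows from it, and the three refined constants come from a careful version of the same argument.

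For the upper bound I would refine the potential-function proof of the trivial bound $\chi_P(G)\le\chi(G)\log n+1$ behind \eqref{eq:chi}. In that proof Mrs.\ Correct keeps, from each presented set $S$, a maximum (weighted) independent subset of $G[S]$ and controls $\Phi=\sum_v c^{h(v)}$ over the currently uncoloured vertices, where $h(v)$ counts how often $v$ has been hit; the multiplier $c$ one can afford is limited by $\max_S\chi_f(G[S])$. The structural input for $\sG(n,p)$ --- which I expect to be the heart of the proof --- is that a.a.s.\ \emph{every} induced subgraph $G[W]$ with $|W|\ge u_0$ has $\alpha(G[W])\ge(1-o(1))\log_b(|W|p)$, for a threshold $u_0$ as small as a union bound over the $\binom{n}{|W|}$ choices of $W$ permits. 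Granting this, Mrs.\ Correct plays ``by scales'': against large presented sets she keeps genuinely large (robust) independent sets, so such rounds are few and each removes many vertices; against smaller presented sets $\chi_f(G[S])$ is correspondingly smaller, so a larger multiplier is affordable there; and the final $O(u_0)$ uncoloured vertices --- where robustness fails --- are dealt with either by recursion (when $np$ is large enough that $\sG(u_0,p)$ again satisfies the hypothesis) or, when that suffices, by the trivial bound \eqref{eq:chi} applied directly. Summing the per-scale contributions to $h(v)$ gives $\chi_P(G)=O(\chi(G))$, and a careful accounting gives the sharp constants.

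The constants arise from optimizing $u_0$ against $np=(\log n)^{C+o(1)}$. The union bound forces $u_0$ to be of order $n\,(\log n)^{-\Theta(1)}$, which pins $\log_b(u_0p)$ down as a definite fraction of $\log_b(np)$ and hence fixes how efficiently the bulk of the vertices can be peeled off; this produces the factor $\tfrac{2C}{C-2}$ when $C\ge 4$, with both the peeling loss and the leftover contribution vanishing as $C\to\infty$ so the factor tends to $2$, whereas once $C<4$ the residual graph $\sG(u_0,p)$ is no longer negligible and one must settle for the cruder constant $4$. The genuine obstacle, I expect, is the structural lemma itself: one needs a tail estimate $\Pr[\alpha(\sG(m,p))<(1-o(1))\log_b(mp)]\ll\binom{n}{m}^{-1}$ for $m$ as small as possible --- a second-moment/Janson-type bound pushed to its limit --- and it is precisely the breakdown of this estimate once $np$ drops below $(\log n)^{2+\eps}$ that both forces the hypothesis of the theorem and prevents a constant below $4$ in the low range. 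A minor, routine nuisance is arranging that the independent sets chosen in the potential argument are simultaneously heavy and of large cardinality, so that the count of the ``expensive'' rounds stays controlled.
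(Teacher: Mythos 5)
Your lower bound and your treatment of large presented sets do track the paper's route: the paper also proves a ``for every induced subgraph of size at least $n/(\omega\log^2 n)$'' independence lemma and counts large rounds against it. But note that in this sparsity range that lemma (Lemma~\ref{lem:largesets2} and Remark~\ref{rem:densecase}) comes from the Krivelevich--Vu greedy-algorithm tail bound, not from a Janson/second-moment estimate: the strong bound of Theorem~\ref{lem:Krivelevich} requires $p\gg n^{-2/5}\log^{6/5}n$ and is unavailable when $np$ is polylogarithmic, so your hoped-for estimate is obtainable only in the weaker greedy form. The genuine gap, however, is your handling of the sets below the large threshold. Mr.\ Paint may present a fixed vertex in a huge number of sets of intermediate size (between $np/(\omega\log^2 n)$ and $n/(\omega\log^2 n)$); each such round costs that vertex an eraser while removing only $O(\log(np)/p)$ vertices from the game, so neither ``few rounds'' nor ``each vertex is rarely presented'' applies, and the adaptive residual set of uncoloured vertices is chosen by Mr.\ Paint with knowledge of $G$, so it is not distributed as $\sG(u_0,p)$ --- any recursion must run through statements valid for \emph{all} subsets simultaneously. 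The paper needs a dedicated mechanism here: each medium set is partitioned via Lemma~\ref{lem:mediumsets} into independent sets of size $\lceil 1/(9p)\rceil$, dyadic-scale independent sets of size $\lceil i/(9p2^i)\rceil$, and a small leftover, and Mrs.\ Correct randomizes over the three groups and over the types (type $i$ with probability proportional to $1/i$), so that whenever a vertex's group is selected it is permanently coloured with probability at least $\omega\log(np)\log n/(9np)$; a Chernoff argument then shows every vertex is hit by medium sets only $o(np/\log(np))$ times a.a.s. Your potential-function/``affordable multiplier'' sketch does not supply any quantitative substitute for this, so the per-vertex cost of the intermediate rounds is simply uncontrolled in your proposal.

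Separately, your account of the constant $4$ for $C\in(2,4)$ is not correct and would not yield the stated bound. Even granting your structural lemma with a $(1-o(1))$ constant, for subsets of size $n/(\omega\log^2 n)$ one only gets independent sets of size $(1-2/C+o(1))\log(np)/p$ (since $\log(up)$ degrades to $(1-2/C+o(1))\log(np)$), hence the factor $2C/(C-2)$, which exceeds $4$ throughout $(2,4)$ and blows up as $C\downarrow 2$; no ``residual graph'' consideration repairs this. In the paper the factor $4$ comes from a different structural input (Remark~\ref{rem:densecase}): for $p=o(n^{-5/6})$ a.a.s.\ every vertex lies in at most one triangle, so after deleting a suitable matching the graph is triangle-free, Shearer's bound gives independent sets of size $\log(np)/((2+o(1))p)$ in every large subset, and losing at most half of such a set to the deleted matching still leaves an independent set in $G$ of size $\log(np)/((2+o(1))p)$, i.e.\ within a factor $4+o(1)$ of $\chi(G)$. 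Without this (or an equivalent) ingredient, the $C\in(2,4)$ case of the theorem is not proved on your route.
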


\bigskip

Finally, for very sparse graphs we have the following.

\begin{thm}\label{thm:main3}
Let $G \in \mathcal{G}(n,p)$ with $p=O(1/n)$. Then, a.a.s., $\chi_P(G)=\Theta(1)=\Theta(\chi(G))$.
\end{thm}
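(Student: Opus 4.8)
The plan is to reduce everything to the single bound $\chi_P(G) = O(1)$ (a.a.s.). Indeed, by~\eqref{eq:chi} we have $1 \le \chi(G) \le \chi_L(G) \le \chi_P(G)$ for every graph $G$, so as soon as we show that a.a.s.\ $\chi_P(G) \le K$ for some absolute constant $K$, we obtain $1 \le \chi(G) \le \chi_P(G) \le K$, and hence $\chi_P(G) = \Theta(1) = \Theta(\chi(G))$ a.a.s.; no separate work on $\chi$ is needed. Thus the whole problem is to bound $\chi_P(G)$ from above by a constant.

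For this I would use the (deterministic) inequality that a $d$-degenerate graph is $(d+1)$-paintable, i.e.\ $\chi_P(G) \le \mathrm{col}(G)$, and I would include its short proof. Fix an ordering $v_1,\dots,v_n$ of $V(G)$ in which every $v_i$ has at most $d$ neighbours of larger index (such an ordering exists precisely because $G$ is $d$-degenerate). Mrs.\ Correct, playing with $d$ erasers at each vertex, proceeds as follows: when Mr.\ Paint colours the set $S$ of currently active vertices, she scans $S$ in order of decreasing index and keeps the (new) colour on $v \in S$ exactly when none of the larger-index neighbours of $v$ lying in $S$ has been kept so far in this scan. The resulting set of kept vertices is independent, so the move is legal; moreover each time the colour is erased from some $v$, this is witnessed by a larger-index neighbour of $v$ that is permanently coloured in that very round and therefore leaves the game. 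Since $v$ has at most $d$ larger-index neighbours in all, its colour is erased at most $d$ times, so $d$ erasers per vertex always suffice. (This is due to Schauz; alternatively one may simply cite it.)

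It remains to show that when $p = O(1/n)$, say $p \le C/n$ with $C$ constant, a.a.s.\ $G \in \mathcal{G}(n,p)$ is $D$-degenerate for a suitable constant $D = D(C)$; equivalently, that a.a.s.\ $G$ contains no subgraph of minimum degree at least $D$ (equivalently, its $D$-core is empty). This is a routine first-moment argument: a subgraph on $k$ vertices with minimum degree at least $D$ spans at least $Dk/2$ edges, so
\[
\Pr\big[\exists\,k\text{-set spanning} \ge Dk/2 \text{ edges}\big] \le \binom{n}{k}\binom{\binom{k}{2}}{\lceil Dk/2\rceil}p^{\lceil Dk/2\rceil} \le \left[\,e\Big(\tfrac{eC}{D}\Big)^{D/2}\Big(\tfrac{k}{n}\Big)^{D/2-1}\,\right]^{k}.
\]
For $D=D(C)$ chosen large enough the bracketed quantity is at most $e^{-2}$ for every $D \le k \le n$, and it tends to $0$ for bounded $k$; summing the right-hand side over $k$ then gives $o(1)$. (One could instead simply invoke the well-known fact that for $\mathcal{G}(n,c/n)$ with $c = O(1)$ the largest $k$ admitting a nonempty $k$-core is $O(1)$ a.a.s.) Combining this with the previous paragraph yields $\chi_P(G) \le D(C)+1 = O(1)$ a.a.s., which completes the proof.

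The only genuinely structural ingredient is the degeneracy-to-paintability implication, and it is elementary as sketched. The one place needing a little care is the first-moment estimate, specifically verifying that the bracketed quantity is bounded away from $1$ uniformly over the whole range $D \le k \le n$ (the tightest regime being $k$ proportional to $n$); but since we only need the existence of \emph{some} constant $D=D(C)$, there is plenty of slack and no real obstacle.
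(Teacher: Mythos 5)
Your proposal is correct, but it takes a genuinely different route from the paper. The paper splits the vertex set by a degree threshold: vertices of degree less than $100c^2$ need only $O(1)$ erasers because Mrs.\ Correct always extends her choice to a \emph{maximal} independent set (so erasers used at $v$ are at most $\deg(v)$), while the subgraph induced by the high-degree vertices is shown, via a first-moment count of ``complex structures'' (two cycles joined by a path, or a cycle with a chordal path), to consist a.a.s.\ only of tree and unicyclic components, which are handled by its Lemma~\ref{lem:trees} (one, respectively two, erasers suffice). You instead combine the deterministic inequality $\chi_P(G)\le \mathrm{col}(G)$ (degeneracy plus one, with a correct sketch of the decreasing-index scan; this is indeed available in the on-line list colouring literature, e.g.\ Zhu/Schauz) with the standard fact that $\sG(n,p)$ with $p\le C/n$ a.a.s.\ has no subgraph of minimum degree $D$ for a suitable constant $D=D(C)$, i.e.\ bounded degeneracy. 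Your argument is shorter and avoids both the complex-structure count and the separate tree/unicyclic lemma; the paper's argument, on the other hand, is self-contained given Lemma~\ref{lem:trees} and yields very small explicit eraser counts in the subcritical range $p<0.99/n$, whereas your constant $D(C)+1$ is larger (though of course still $O(1)$, which is all Theorem~\ref{thm:main3} asks). One presentational point in your union bound: having the bracketed quantity uniformly at most $e^{-2}$ only bounds the sum over $k\ge D+1$ by a \emph{constant}, not $o(1)$; to get $o(1)$ you should split the range, e.g.\ for $k\le\sqrt n$ use the factor $(k/n)^{D/2-1}$ to make each term at most $(A/n)^{k}$ (taking $D\ge 6$, say), and for $k>\sqrt n$ use the uniform bound $e^{-2k}\le e^{-2\sqrt n}$ against at most $n$ terms. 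You have both ingredients in hand, so this is a one-line repair rather than a gap; the reduction via \eqref{eq:chi} and $\chi(G)\ge 1$ at the start is also handled correctly.
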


\section{Preliminaries}

Most of the time, we will use the following version of \textbf{Chernoff's bound}. Suppose that $X \in \Bin(n,p)$ is a binomial random variable with expectation $\mu=np$. If $0<\delta<1$, then 
$$
\pr [X < (1-\delta)\mu] \le \exp \left( -\frac{\delta^2 \mu}{2} \right),
$$ 
and if $\delta > 0$,
\[\pr [ X > (1+\delta)\mu] \le \exp\left(-\frac{\delta^2 \mu}{2+\delta}\right).\]
However, at some point we will need the following, stronger, version: for any $t \ge 0$, we have
\begin{equation}\label{eq:strongChernoff}
\pr [X \ge \mu + t] \le \exp \left( - \mu \varphi \left( \frac {t}{\mu} \right) \right),
\end{equation}
where
$$
\varphi(x) = (1+x)\log(1+x)-x.
$$
These inequalities are well known and can be found, for example, in~\cite{JLR}.

\bigskip

Let $G=(V,E)$ be any graph. A set $S \subseteq V$ is called \textbf{independent} if no edge $e \in E$ has both endpoints in $S$. Denote by $\alpha(G)$ the \textbf{independence number} of $G$, that is, the size of a largest independent set of $G$. Let $k_0$ be defined as follows: 
$$
k_0=k_0(n,p)=\max \left\{k \in \nat: \binom{n}{k}(1-p)^{\binom{k}{2}} \geq n^4 \right\}.
$$
It is well known that $k_0$ is well defined (for all $n$ sufficiently large and provided that $p \le 1 - \eps$ for some $\eps>0$) and that $k_0 \sim 2 \log_b (np)$ with $b=1/(1-p)$. The following result was proved in~\cite{KSVW02}.

\begin{thm}[\cite{KSVW02}]\label{lem:Krivelevich}
Suppose $n^{-2/5} \log^{6/5} n \ll p \leq 1-\varepsilon$ for some constant $\varepsilon > 0$. Let $G \in \mathcal{G}(n,p)$. Then
$$
\pr(\alpha(G) < k_0) = \exp\left(-\Omega\left( \frac{n^2}{k_0^4 \; p}\right)\right).
$$
\end{thm}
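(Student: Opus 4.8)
The event to control is $\{\alpha(G)<k_0\}=\{X_{k_0}=0\}$, where $X_{k_0}=X_{k_0}(G)$ is the number of independent sets of size $k_0$ in $G$. The first‑moment input is immediate from the definition of $k_0$: writing $\mu:=\ex X_{k_0}=\binom{n}{k_0}(1-p)^{\binom{k_0}{2}}$, we have $\mu\ge n^4$. So the whole task is to show that $X_{k_0}$ is concentrated sharply enough around $\mu$ to make $\pr[X_{k_0}=0]$ as small as claimed. Before anything else I would record the precise estimates $k_0\sim 2\log_b(np)$ and, as its consequence, $b^{k_0}\asymp(np)^2$ (with $b=1/(1-p)$), which drive all the calculations below.

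The natural tool is Janson's inequality: in its form valid when $\Delta\ge\mu$ it gives $\pr[X_{k_0}=0]\le\exp(-\mu^2/(2\Delta))$, where $\Delta=\sum\pr[S,T\text{ both independent}]$ over ordered pairs of $k_0$-sets with $2\le j:=|S\cap T|\le k_0-1$. The contribution of a fixed $j$ to $\Delta/\mu^2$ is $\binom{k_0}{j}\binom{n-k_0}{k_0-j}(1-p)^{-\binom{j}{2}}/\binom{n}{k_0}$. A short computation shows that this sum over $j$ is dominated by its two endpoints. The $j=k_0-1$ endpoint — pairs of independent $k_0$-sets sharing all but one vertex, i.e.\ the ``clustering near the threshold'' that one must be careful about — contributes only $\Theta(k_0b/(\mu\,np^2))=o(1)$, which is negligible precisely because being independent on two nested $(k_0{-}1)$-cores is itself costly (and because $np^2\to\infty$ in the stated range); while the $j=2$ endpoint contributes $\Theta(k_0^4/(n^2(1-p)))$. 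Hence $\mu^2/\Delta=\Theta(n^2(1-p)/k_0^4)$, which is $o(\mu)$ since $\mu\ge n^4$, so Janson yields $\pr[X_{k_0}=0]\le\exp(-\Omega(n^2(1-p)/k_0^4))$. As $1-p\ge\varepsilon$, this already proves the theorem whenever $p$ is in addition bounded away from $0$.

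The genuinely harder case is $p=o(1)$, where the estimate just obtained is only $\exp(-\Omega(n^2/k_0^4))$ whereas the theorem demands the factor‑$1/p$ stronger $\exp(-\Omega(n^2/(k_0^4p)))$. Here I would pass to a two‑round exposure $G=G_1\cup G_2$ with $G_i\in\sG(n,p_i)$ independent, $(1-p_1)(1-p_2)=1-p$, $p_1=(1-o(1))p$ and $p_2=o(p)$: first use the second‑moment estimate above in the slightly sparser $G_1$ (where the right target size loses only a constant, not a power of $p$) to show that, except with probability $\exp(-\Omega(n^2/(k_0^4p)))$, $G_1$ has many independent $k_0$-sets, then thin them by a greedy/deletion argument to a large sub‑family $\mathcal F$ whose pairwise intersections are no larger than for uniformly random $k_0$-sets, and finally use the fresh randomness of $G_2$ together with a last Janson bound over the controlled overlap structure of $\mathcal F$ to conclude that some member of $\mathcal F$ is still independent in $G$. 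The main obstacle — and the place where the hypothesis $p\gg n^{-2/5}\log^{6/5}n$ is really forced — is the calibration: one must choose the split $(p_1,p_2)$ and the size of $\mathcal F$ so that the three failure modes (the first‑round lower‑tail event, the size/spread deficiency of $\mathcal F$, and the simultaneous destruction of all of $\mathcal F$ by $G_2$, which costs $(1-p_2)^{\binom{k_0}{2}}$ per set) each contribute at most $\exp(-\Omega(n^2/(k_0^4p)))$, and establishing the exponentially small tail bounds for $X_{k_0}(G_1)$ and for the overlap statistics of $\mathcal F$ that this balancing requires is the technical heart of the proof and the step I expect to be the sticking point.
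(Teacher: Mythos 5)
The paper does not prove this statement: it is quoted verbatim from \cite{KSVW02} and used as a black box (only Corollary~\ref{cor:largesets} is derived from it). So there is no in-paper argument to compare your proposal against, and the question is simply whether the proposal stands on its own.

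Your Janson computation for the dense regime is correct. Writing $\mu=\ex X_{k_0}=\binom{n}{k_0}(1-p)^{\binom{k_0}{2}}\ge n^4$, the $j=2$ overlap term dominates $\Delta/\mu^2$ at order $k_0^4/(n^2(1-p))$, the $j=k_0-1$ endpoint is of order $k_0 b/(\mu n p^2)$ and hence negligible because $\mu\ge n^4$, and Janson's inequality (in the $\Delta\ge\mu$ form) gives $\pr[\alpha(G)<k_0]\le\exp(-\Omega(n^2/k_0^4))$. When $p=\Theta(1)$ this is exactly the claimed $\exp(-\Omega(n^2/(k_0^4 p)))$, so the dense case is complete.

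For $p=o(1)$, however, the sketch does not close, and I see two concrete objections. First, circularity: the two-round plan asks for a lower-tail bound for $X_{k_0}(G_1)$ of strength $\exp(-\Omega(n^2/(k_0^4 p)))$, which is precisely the type of bound you are trying to prove (just at a slightly smaller edge probability $p_1$), and Janson in $G_1$ again yields only $\exp(-\Omega(n^2/k_0^4))$; nothing in the sketch explains where the extra factor $1/p$ first enters the exponent. Second, the greedy thinning to a ``spread'' family $\mathcal{F}$ needs its own exponential concentration for the overlap statistics among independent $k_0$-sets in $G_1$, which is a second-moment-of-pairs statement at least as delicate as the original problem, and it is not addressed. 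You flag both of these yourself as the expected sticking point, and that self-assessment is accurate: you have correctly localized the difficulty but not resolved it. The proof in \cite{KSVW02} is substantially more involved, does not proceed via a thinned witness family, and the hypothesis $p\gg n^{-2/5}\log^{6/5}n$ arises from error terms specific to their argument rather than from the round-two bookkeeping you outline.
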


We obtain the following immediate corollary that will be useful to deal with dense random graphs. In fact, this lemma is the bottleneck for the argument that prevents us to extend Theorem~\ref{thm:main} for sparser graphs.

\begin{cor}\label{cor:largesets}
Let $\eps>0$ be any constant and let $\omega = \omega(n) = o(\log n)$ be any function tending to infinity as $n \to \infty$. Suppose that
$$
\omega (\log \log n)^{1/3} (\log n)^2 n^{-1/3}  \leq p \leq 1-\varepsilon.
$$
Let $G \in \mathcal{G}(n,p)$. Then, a.a.s.\ every set $S \subseteq V(G)$ with $|S| =s \geq s_0 := n/(\omega \log^2 n)$ contains an independent set of size $k_0 = k_0(s,p) \sim k_0(n,p)$.
\end{cor}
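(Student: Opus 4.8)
The plan is to apply Theorem~\ref{lem:Krivelevich} to each induced subgraph $G[S]$ separately and then take a union bound over all choices of $S$. Write $b=1/(1-p)$ and $s_0=n/(\omega\log^2 n)$, and note at the outset that $s_0\to\infty$ and that $\log(np)=\Theta(\log n)$, since $np\geq\omega(\log\log n)^{1/3}(\log n)^2 n^{2/3}\gg n^{1/2}$. The statement also contains a purely deterministic ingredient, namely $k_0(s,p)\sim k_0(n,p)$ uniformly over $s_0\leq s\leq n$: writing $\log(sp)=\log(np)-\log(n/s)$ with $0\leq\log(n/s)\leq\log(\omega\log^2 n)=O(\log\log n)=o(\log(np))$, the known asymptotics $k_0(m,p)\sim 2\log_b(mp)$ give $k_0(s,p)=(1+o(1))k_0(n,p)$ in this range; in particular, since $\log b=\log(1/(1-p))\geq p$, one has the crude bound $k_0(s,p)=O(\log(np)/p)=O(\log n/p)$ uniformly over $s_0\leq s\leq n$.

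Next, fix $s$ with $s_0\leq s\leq n$ and a set $S$ of size $s$. Since $G[S]$ is distributed as $\mathcal{G}(s,p)$, I want to apply Theorem~\ref{lem:Krivelevich} with $s$ playing the role of $n$, which requires checking $s^{-2/5}\log^{6/5}s\ll p\leq 1-\eps$. The upper bound is part of the hypothesis; for the lower bound, since $s\mapsto s^{-2/5}\log^{6/5}s$ is eventually decreasing, it suffices to verify it at $s=s_0$, where $s_0^{-2/5}\log^{6/5}s_0\leq\omega^{2/5}(\log n)^2 n^{-2/5}$, and a short computation shows that this is $\ll\omega(\log\log n)^{1/3}(\log n)^2 n^{-1/3}\leq p$ (it reduces to $\omega^{3/5}(\log\log n)^{1/3}n^{1/15}\to\infty$). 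The theorem then yields
\[
\pr\bigl(\alpha(G[S])<k_0(s,p)\bigr)=\exp\left(-\Omega\left(\frac{s^2}{k_0(s,p)^4\,p}\right)\right).
\]

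Then I would union bound over all $s\in[s_0,n]$ and all $\binom{n}{s}$ sets of size $s$. Using $\binom{n}{s}\leq(en/s)^s$ together with $en/s\leq e\omega\log^2 n$ for $s\geq s_0$, we get $\log\binom{n}{s}\leq c\,s\log\log n$ for some constant $c>0$, so the probability that the conclusion fails is at most $\sum_{s=s_0}^{n}\exp\bigl(c\,s\log\log n-\Omega(s^2/(k_0(s,p)^4 p))\bigr)$. The point is that $k_0(s,p)=O(\log n/p)$ and the hypothesis $p\geq\omega(\log\log n)^{1/3}(\log n)^2 n^{-1/3}$ — used in the form $np^3\geq\omega^3(\log\log n)(\log n)^6$ — give $s^2/(k_0(s,p)^4 p)\geq s\cdot\bigl(s_0/(k_0(s,p)^4 p)\bigr)=\Omega(s\,\omega^2\log\log n)$, which dominates the entropy term $c\,s\log\log n$ because $\omega\to\infty$. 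Hence each summand is at most $\exp(-\Omega(s))$, the sum is $O(\exp(-\Omega(s_0)))=o(1)$, and on the complementary event every $S$ with $|S|=s\geq s_0$ satisfies $\alpha(G[S])\geq k_0(s,p)$, i.e.\ $S$ contains an independent set of size $k_0(s,p)$, which by the first paragraph is $\sim k_0(n,p)$; this is the claim.

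The step I expect to be the real obstacle is this last union bound: for the smallest admissible sets, $s\approx s_0$, one must beat the entropy $\log\binom{n}{s_0}=\Theta(s_0\log\log n)$ by the exponent $\Theta(s_0^2/(k_0(n,p)^4 p))$ coming from Theorem~\ref{lem:Krivelevich}. This forces $s_0/(k_0(n,p)^4 p)\gg\log\log n$, equivalently (up to logarithmic factors) $np^3\gg\omega(\log n)^6\log\log n$, i.e.\ $p\gg\omega^{1/3}(\log\log n)^{1/3}(\log n)^2 n^{-1/3}$; this is exactly the threshold appearing in the hypothesis, with an $\omega^{2/3}$ factor to spare, and — as remarked just before the corollary — it is the reason the method does not reach sparser $p$.
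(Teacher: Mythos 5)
Your proof is correct and follows essentially the same route as the paper: both apply Theorem~\ref{lem:Krivelevich} to each $G[S]$ with $s$ in the role of $n$, check the hypothesis $s^{-2/5}\log^{6/5}s\ll p$, compare the failure exponent $\Omega(s\omega^2\log\log n)$ (obtained from $np^3\geq\omega^3(\log\log n)(\log n)^6$) against the entropy $O(s\log\log n)$, and sum over $s\geq s_0$. The only cosmetic difference is that you isolate the deterministic step $k_0(s,p)\sim k_0(n,p)$ and spell out the monotonicity check for the lower bound on $p$, whereas the paper treats these in passing.
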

\begin{proof}
Fix a set $S \subseteq V(G)$ with $|S| = s \geq n/(\omega \log^2 n)$. First, let us note that 
$$
k_0 = k_0(s,p) \sim 2 \log_b (sp) = 2 \log_b (np) \left( 1 - O\left( \frac {\log \log n}{\log n} \right) \right) \sim k_0(n,p).
$$
Moreover, since $n/(\omega \log^2n) \le s \le n$, we can easily verify that $p$ satisfies $s^{-2/5} \log^{6/5} s \ll p \leq 1-\varepsilon$ (with room to spare in the lower bound). It follows immediately from Theorem~\ref{lem:Krivelevich} that the probability of not having an independent set of size $k_0$ in $S$ is at most 
$$
\exp\left(- \Omega \left( \frac {s^2}{k_0^4 p} \right) \right) = \exp\left(- \Omega \left( \frac {s^2 p^3}{\log^4 n} \right) \right) = \exp\left(- \Omega \left( \frac {s n p^3}{\omega \log^6 n} \right) \right) = \exp\left(- \Omega \left( s \omega^2 \log \log n \right) \right).
$$
On the other hand, the number of sets of size $s$ can be bounded as follows:
$$
{n \choose s} \le \left( \frac {ne}{s} \right)^s = \exp \left( s \log (ne/s) \right) \le \exp \left( 3 s \log \log n \right),
$$
since $\log (ne/s) \le \log (e \omega \log^2 n) = (2+o(1)) \log \log n$. Hence, the expected number of sets of size $s$ for which the desired property fails is $\exp\left(- \Omega \left( s \omega^2 \log \log n \right) \right)$. Summing over all $s \geq s_0$, the expected number of all such sets is  $\exp\left(- \Omega \left( s_0 \omega^2 \log \log n \right) \right) = o(1)$. The claim holds by Markov's inequality. 
\end{proof}

\bigskip

Given a graph $G$ and some fixed ordering of its vertices (for example, we may assume that it is the natural ordering $1, 2, \ldots,n$), the greedy colouring algorithm proceeds by scanning vertices of $G$ in the given order and assigning the first available colour for the current vertex.  We will use the following lemma due to~\cite{KV02} (see also~\cite{CojaTaraz03}). However, since we use it in a slightly different setting, we point out a few small differences in the argument by providing a sketch of the proof. In fact, this lemma is the bottleneck for the argument that prevents us to extend Theorem~\ref{thm:main2} for sparser graphs.

\begin{lem}\label{lem:largesets2}
Let $\omega = \omega(n) := \log \log n$. Given any constant $0 < \eps < 1$, let $G \in \mathcal{G}(n,p)$ with $\log^{2+\eps} n/n =: p_0 \leq p=o(1/\log n)$. Then, a.a.s.\ every subgraph of $G$ of size $u \geq u_0 := n/(\omega \log^2 n)$ has an independent set of size at least $\eps(1-\eps) \log (np)/(3p)$.
\end{lem}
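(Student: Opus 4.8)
The plan is to prove this via a first-moment (union bound) argument over all vertex subsets of size $u$, combined with the greedy colouring heuristic applied to a uniformly random ordering of such a subset. Fix $u \geq u_0 = n/(\omega \log^2 n)$ and a subset $U \subseteq V(G)$ with $|U| = u$; we want to show that, with very high probability, the greedy algorithm on $G[U]$ (with vertices of $U$ taken in the natural order, say) produces some colour class of size at least $k := \eps(1-\eps)\log(np)/(3p)$, since a colour class is an independent set. Equivalently, if every colour class had size less than $k$, the greedy algorithm would use more than $u/k$ colours; so it suffices to bound the probability that greedy uses many colours on $G[U]$.

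First I would set up the standard analysis of greedy colouring on $\sG(n,p)$ restricted to $U$: when the algorithm is about to colour a vertex and $t$ colours have been used so far, the vertex receives a new colour $t+1$ only if it is adjacent to at least one already-coloured vertex in each of the $t$ existing colour classes. The key quantitative step is to show that if we ever reach a stage with roughly $2\log(np)/p$ colours all of size about $k = \Theta(\log(np)/p)$, then the conditional probability that a given further vertex fails to fit into all of these classes is at most something like $(1-(1-p)^{k})^{2\log(np)/p}$, which is exponentially small in $u$ once one takes into account that there are $\Theta(u)$ uncoloured vertices still to place. More precisely, I would argue that with the stated parameters it is very unlikely that the greedy algorithm on $G[U]$ uses more than, say, $u/k$ colours; this is where one needs $p = o(1/\log n)$ (so that $k \to \infty$ and $(1-p)^k = \exp(-(1+o(1))pk)$ is not too small) and $p \geq \log^{2+\eps}n/n$ (so that $u p$ is large enough, $\geq \omega \log^{\eps} n \to \infty$, making $\log(np) = \log(up) + o(\log(up))$ and the error probabilities genuinely small). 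The relevant failure probability for a fixed $U$ should come out as $\exp(-\Omega(u \cdot \text{something} \cdot \log\log n))$ or at least $\exp(-\omega(u \log(ne/u)))$, beating the entropy term.

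Then I would take the union bound: the number of subsets of size $u$ is $\binom{n}{u} \leq \exp(u \log(ne/u))$, and since $u \geq u_0 = n/(\omega\log^2 n)$ we have $\log(ne/u) = O(\log\log n)$, exactly as in the proof of Corollary~\ref{cor:largesets}. So as long as the per-set failure probability is $\exp(-\omega(u \log\log n))$, summing over all $U$ and all $u \geq u_0$ still gives $o(1)$, again using that the sum is dominated by its smallest term $u = u_0$. This yields the a.a.s.\ statement. The factor $\eps(1-\eps)/3$ rather than the "morally correct" $1$ in front of $\log(np)/p$ is precisely the slack one builds in to make the failure probabilities small enough to survive the union bound over exponentially many sets; I would not try to optimize it.

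The main obstacle, and the reason the lemma is stated as a "sketch of a known result with small differences," is handling the conditioning carefully in the greedy analysis: the event "vertex $v$ gets a new colour" depends on edges between $v$ and previously coloured vertices, and one must make sure these edge-exposures are (conditionally) independent across the vertices being coloured and independent of the structure already revealed, so that the Chernoff-type bound (\ref{eq:strongChernoff}) or the simple inequality $\pr[X \ge \mu+t] \le e^{-\mu\varphi(t/\mu)}$ can legitimately be applied to the number of "new-colour" events among the last $\Omega(u)$ vertices. The correct way is to reveal edges only as needed and to observe that, conditioned on the first $i$ vertices having been coloured with $t$ colours, whether vertex $i+1$ opens colour $t+1$ is determined by a fresh batch of independent $\Bin$-distributed edge indicators; the "slightly different setting" compared to~\cite{KV02} is that here we run greedy on an arbitrary $u$-subset rather than on all of $\sG(n,p)$, but since induced subgraphs of $\sG(n,p)$ are themselves binomial random graphs, the same computation goes through with $n$ replaced by $u$, and the lower bound $p \geq \log^{2+\eps}n/n$ guarantees $up \to \infty$ fast enough that $\log(up) \sim \log(np)$ and all the error terms remain negligible.
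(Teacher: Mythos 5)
Your proposal is correct and follows essentially the same route as the paper: run the Krivelevich--Vu greedy analysis on each $u$-subset (whose induced subgraph is again binomial with the same $p$), obtain a per-set failure probability of the form $\exp\left(-u(\log n)^{\Omega(1)}\right)$, and beat the entropy term $\binom{n}{u}\le\exp(3u\log\log n)$ before summing over $u\ge u_0$, exactly as in the paper's sketch. One small correction: your claim that $\log(np)=\log(up)+o(\log(up))$ fails at the bottom of the range (for $np=\log^{2+\eps}n$ and $u=u_0$ one has $\log(up)=(\eps+o(1))\log\log n$ versus $\log(np)=(2+\eps+o(1))\log\log n$), and it is precisely this discrepancy --- greedy only guarantees about $(1-\eps)\log(up)/p$, and $\log(up)\ge\frac{\eps+o(1)}{2+\eps}\log(np)$ --- rather than union-bound slack that produces the constant $\eps(1-\eps)/3$; since your target size already carries that factor, the argument still goes through unchanged.
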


Note that in the lemma we set $\omega=\log \log n$; other functions tending to infinity slower than $\log \log n$, constant, or even tending to $0$ not too quickly clearly would work as well, but would make the statement of the result weaker.

\begin{proof}[Sketch of the proof] We follow the notation as in~\cite{KV02} and apply the greedy approach given there for each subgraph of size $u$. Let  
$$
\alpha_0=\frac{(1-\eps) \log (up)}{p} \hspace{0.5in} \text{ and  } \hspace{0.5in} t=\frac{up}{\log (up)}. 
$$
Note that
\begin{eqnarray*}
\alpha_0 &\geq& \frac{(1-\eps) \log (np/(\omega \log^2 n))}{p} = \frac{(1-\eps) (\log (np) - (2+o(1)) \log \log n)}{p} \\
&\ge& \frac{(1-\eps) (\eps+o(1)) \log (np)}{(2+\eps) p} \ge \frac{(1-\eps)\eps \log (np)}{3p}, 
\end{eqnarray*}
since $np \ge \log^{2+\eps} n$. (Note that, if $np \ge \log^C n$, then we get the better estimate $$\alpha_0 \ge (1-\eps)(1-2/C+o(1)) \log(np)/p;$$ see Remark~\ref{rem:densecase} below.) Moreover, 
\begin{eqnarray*}
(1-p)^{\alpha_0} &=& \exp \left( -p \alpha_0 (1+O(p)) \right) = \exp \left( - (1-\eps) \log(up) (1+O(p)) \right) \\
&\sim& \exp \left( - (1-\eps) \log(up) \right) = (up)^{-1+\eps},
\end{eqnarray*}
since $p = o(1/\log n)$. For a fixed subgraph of size $u$, it follows from~\cite{KV02} that the probability that the algorithm fails to produce an independent set of size at least $\alpha_0$ is at most 
\begin{align*}
\exp\left(-t(1-p)^{\alpha_0}u\eps\right)=&\exp \left( -\frac{(up) (1+o(1)) (up)^{-1+\eps} u \eps}{\log (up) } \right) \\
=& \exp \left( - \frac{u^{1+\eps} p^{\eps} (\eps+o(1))}{\log (up)} \right).
\end{align*}
By taking a union bound over all 
$$
\binom{n}{u} \leq \left( \frac {ne}{u} \right)^u = \exp(u \log(ne/u) ) \le \exp(3 u \log \log n)
$$  
sets of size $u$, the probability $P_u$ that there exists a subgraph of size $u$ for which the algorithm fails is at most
$$
\exp \left( - u \left( \frac{(up)^{\eps} (\eps+o(1))}{\log (up)} - 3 \log \log n \right) \right) \leq \exp \left( - u \left( \frac{(u_0 p_0)^{\eps} (\eps+o(1))}{\log (u_0p_0)} - 3 \log \log n \right) \right),
$$
as the function $f(x) = x^{\eps}/ \log x$ is increasing for large enough $x$. Since $u_0 p_0 = \log^{\eps} n / \omega = \log^{\eps+o(1)} n$,
\begin{align*}
P_u \le& \exp \left( - u \left( \frac {(\log n)^{(\eps+o(1))\eps} (\eps+o(1))}{(\eps+o(1)) \log \log n} - 3 \log \log n \right) \right) \\
=& \exp \left( - u \left( (\log n)^{\eps^2+o(1)} - 3 \log \log n \right) \right) \le e^{-u}.
\end{align*}
Summing over all $u_0 \leq u \leq n$, we see that the probability that the algorithm fails is at most
$
\sum_{u=u_0}^n e^{-u} = O(e^{-u_0}) = o(1),
$
and the lemma follows.
\end{proof}

\bigskip

\begin{remark}\label{rem:densecase}
Lemma~\ref{lem:largesets2} is sufficient to determine the order of the on-line choice number for sparse random graphs. We comment on improvements in order to obtain the smallest leading constant in the upper bound. From the proof of the lemma it is clear that the bound on the size of the independent set can be improved for denser graphs. More precisely, for $np \geq \log^{\omega} n$ (for $\omega\to\infty$ but still $p = o(1/\log n)$), we can obtain independent sets of size at least $(1-2\eps)\log(np)/p$ for any arbitrarily small $\eps$, and so we are asymptotically just a factor $2+o(1)$ off from the bound of Corollary~\ref{cor:largesets}. 

On the other hand, for $C$ being a constant larger than $2$, and $np=\log^{C+o(1)} n$, we can obtain independent sets of size at least $(1-\eps)(1-2/C+o(1)) \log(np)/p$ (again, for any arbitrarily small $\eps$), so by Lemma~\ref{lem:largesets2} we are off by a factor $2/(1-2/C)+o(1) = 2C/(C-2) + o(1)$. We will now show that for $\log^{2+\eps} n/n \leq p=o(n^{-5/6})$, however, the independence number obtained is by at most a factor $4+o(1)$ off from the one obtained by Corollary~\ref{cor:largesets}. First, as shown above, given $u \ge n/(\omega \log^2 n)$, there are at most $\exp(3u \log \log n)$ sets of size $u$, and the expected number of edges induced by each such set is $\binom{u}{2}p$. By Chernoff's bound, the probability that the number of edges induced by one of them deviates by an additive $\binom{u}{2}p / \log\log n$ factor from its expected value is at most $\exp\left(-(1/\log \log n)^2 u^2 p/5\right)$. Hence, with probability at most 
$$
\exp\left(u (3 \log \log n- (\log n)^{\eps} (1/\log \log n)^2/(5\omega)) \right) \leq \exp\left(-u \right),
$$
there exists a set of size $u$ that does not satisfy the condition. Summing over all $n/(\omega \log^2 n) \leq u \leq n$, we see that a.a.s.\ for all such sets we have $(1+o(1)) u^2 p/2$ edges.  On the other hand, note that for this range of $p$, the expected number of pairs of triangles sharing one vertex in $G$ is $\Theta(n^5 p^6)=o(1)$, and thus by Markov's inequality, a.a.s.\ every vertex is in at most one triangle. It follows that there exists a  set $D$ of edges which is a matching such that after removing $D$ the graph is triangle-free. Since in $G$  the average degree of every set of size $u$ is $up(1+o(1))$, the same also holds for $G \setminus D$. Since $G \setminus D$ is triangle-free, by Shearer's lemma~\cite{Shearer}, an independent set of size $(1+o(1))\log (np)/p$ can be found. By eliminating at most half of the set (one vertex for each edge in $D$), we obtain an independent set in the original graph. It follows that for every set of size $u$, there exists an independent set of size at least $\log (np)/((2+o(1))p)$, being therefore at most a factor $4+o(1)$ off from the size obtained by Corollary~\ref{cor:largesets}.
\end{remark}

\bigskip

We will also need the following observation.

\begin{lem}\label{lem:mediumsets}
Let $G \in \mathcal{G}(n,p)$, for $10\log n/n \le p \le 1$, and set $s_0=10 \log n/p$. Then, a.a.s.\ the following holds:
\begin{itemize}
\item [(i)] every set $S \subseteq V(G)$ with $s_0 \leq |S| \le n$ contains an independent set of size at least $1/(9p)$;
\item [(ii)] for every $i\in\nat$ and every set $S \subseteq V(G)$ with $2^{-i} s_0 \leq |S| < 2^{-i+1} s_0$, $S$ contains an independent set of size at least $i/(9p 2^i)$. 
\end{itemize}
\end{lem}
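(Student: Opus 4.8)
The plan is to handle both parts by the standard first-moment / union-bound technique: for each target size $s$ of the set $S$ and each target size $a$ of the independent set we want to find inside it, we estimate the probability that a fixed $s$-set contains no independent set of size $a$, and then multiply by the number of $s$-sets and sum over $s$. The key probabilistic input is that the event ``$S$ contains no independent set of size $a$'' is bounded by the event ``every $a$-subset of $S$ spans at least one edge,'' whose probability is at most $\binom{s}{a}(1-p)^{\binom a2}$ if we are willing to be crude, or more efficiently we can just note that an $s$-set with no independent set of size $a$ has, by Turán-type considerations, many edges; but the cleanest route here is simply to bound the probability that a fixed $a$-set is independent by $(1-p)^{\binom a2}$ and take a union bound over the $\binom sa$ choices of $a$-set. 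So for part~(i), fixing $s\ge s_0=10\log n/p$ and taking $a=\lceil 1/(9p)\rceil$, the probability that a fixed $s$-set has independence number below $a$ is at most $\binom sa (1-p)^{\binom a2}\le \exp(a\log s - (1-o(1))p a^2/2)$, and since $a\asymp 1/(9p)$ we have $pa^2/2\asymp a/(18p)$, so one checks this is at most $\exp(-\Theta(a/p))=\exp(-\Theta(s_0))$ once we use $\log s\le\log n\le s_0 p/10$; multiplying by $\binom ns\le\exp(s\log n)$ and using $s\ge s_0$ we would need $\log n$ to lose to $a/(s) \cdot (\text{something})$ — here the point is that the exponent $-(1-o(1))pa^2/2 \approx -s_0/18$ must dominate $s\log n$, which fails for large $s$ unless we are more careful.

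The fix (and this is the real content) is to choose the independent-set target proportional to $s$, not constant: inside a set of size $s$ the greedy/Turán bound gives an independent set of size at least $s/(ps)$-ish, i.e. roughly $\log(sp)/p$ by Shearer-type bounds, but Lemma~\ref{lem:mediumsets} only asks for $1/(9p)$ in part~(i), which is much weaker, so the honest approach is: for a fixed $s$-set, the probability it has \emph{no} independent set of size $a$ is at most $\binom sa(1-p)^{\binom a2}$; taking $a = \lceil 1/(9p)\rceil$ and $s$ ranging up to $n$, the dominant term in the union bound $\sum_{s\ge s_0}\binom ns\binom sa(1-p)^{\binom a2}$ is handled by writing $\binom ns\binom sa = \binom na\binom{n-a}{s-a}\le \binom na 2^n$ and observing $\binom na(1-p)^{\binom a2}\le (n(1-p)^{(a-1)/2})^a$; with $a\ge 1/(9p)$ and $p\ge 10\log n/n$ we get $(1-p)^{(a-1)/2}\le \exp(-p(a-1)/2)\le \exp(-1/(19p)+o(1))$, which is $\le \exp(-n/(190\log n))\ll 1/n^2$, so $\binom na(1-p)^{\binom a2}\le \exp(a(\log n - n/(190\log n)))$ decays doubly-exponentially, comfortably beating the $2^n$ factor and the (polynomially many) choices of $s$. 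This gives part~(i).

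For part~(ii) the scaling is different: when $|S|\asymp 2^{-i}s_0 = 10\log n/(2^i p)$, we want an independent set of size $i/(9p2^i)$, which is only a $\Theta(1/i)$ fraction of the size needed in part~(i) relative to the shrunken set. Here I would again use $\binom{|S|}{a}(1-p)^{\binom a2}$ with $a=\lceil i/(9p2^i)\rceil$ and the number of sets $\binom{n}{|S|}\le \exp(|S|\log(en/|S|)) \le \exp\!\big(|S|(i\log 2 + \log(en p/(10\log n)))\big) \le \exp(O(|S|\cdot i))$ for the relevant range (the $\log(np/\log n)$ term is $O(\log n)$ which is absorbed). The probability bound is $\binom{|S|}{a}(1-p)^{\binom a2}\le \exp\!\big(a\log|S| - (1-o(1))pa^2/2\big)$, and $pa^2/2 \asymp a\cdot i/(18p2^i)\asymp a\cdot|S|/18$ while $a\log|S| \le a\cdot O(\log n)$; since $|S|\ge 2^{-i}s_0$ can be as small as $O(\log n/p \cdot 2^{-i})$ — wait, for large $i$ this is below $1$, so the statement is only meaningful for $i = O(\log(1/(p\log n))^{-1}\cdot\ldots)$ — in the meaningful range $|S|\gtrsim$ (a large multiple of) $\log n$, so $pa^2/2$ dominates $a\log|S|+|S|i$ and we again get a bound like $\exp(-\Theta(|S|))$ summable over $i$ and over the $O(\log n)$ dyadic windows.

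The main obstacle I anticipate is the bookkeeping near the two ends: for part~(ii), making sure the claimed independent-set size $i/(9p2^i)$ is at least $1$ (so the statement is non-vacuous) exactly in the range where $2^{-i}s_0 \ge 1$, and simultaneously that the counting factor $\binom{n}{|S|}$ — which contributes $e^{\Theta(i|S|)}$, not merely $e^{\Theta(|S|)}$, because $n/|S|$ is now exponentially large in $i$ — is still beaten by the $e^{-\Theta(p a^2)}$ gain. This forces the constant $9$ (versus a smaller constant) and is presumably why the authors wrote $1/(9p)$ and $i/(9p2^i)$ rather than cleaner expressions; I would carry a generous constant throughout and only at the end check that $9$ suffices. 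The Chernoff bounds quoted in the Preliminaries are not really needed here — the naive $(1-p)^{\binom a2}$ union bound is enough — though one could alternatively phrase the ``no independent set of size $a$ implies many edges'' step via Chernoff, I expect the direct computation to be shorter.
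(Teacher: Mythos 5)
There is a genuine gap, and it is fundamental: the first-moment bound runs in the wrong direction. You write that the probability that a fixed $s$-set contains no independent set of size $a$ is at most $\binom{s}{a}(1-p)^{\binom{a}{2}}$. But $\binom{s}{a}(1-p)^{\binom{a}{2}}$ is the expected number of independent $a$-sets inside $S$, so by Markov it bounds $\pr[\,\alpha(G[S])\ge a\,]$, i.e.\ the probability that an independent $a$-set \emph{does} exist. It gives no upper bound at all on the complementary event $\pr[\,\alpha(G[S])<a\,]$, which is what you need. Indeed, if the expected count were small, you would be proving the opposite of what you want. Everything downstream in your calculation (for both parts) inherits this error, so the numerical checks you do are checking the wrong quantity. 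You also make a subsidiary arithmetic slip (with $a\asymp 1/(9p)$ one has $pa^{2}/2\asymp 1/(162p)$, not $\Theta(s_0)$), but this is moot given the structural issue.

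To make a first-moment-style argument go through in this direction you would need second-moment or concentration machinery (as in the Krivelevich--Sudakov--Vu--Wormald result quoted as Theorem~\ref{lem:Krivelevich}), which is considerably heavier than what this lemma requires. You do briefly mention the correct, cheaper route --- ``an $s$-set with no independent set of size $a$ has, by Tur\'an-type considerations, many edges'' --- and then set it aside as less clean; in fact that is exactly how the lemma should be (and, in the paper, is) proved. Concretely: Chernoff controls the number of edges induced by every set $S$ of the relevant size (in part~(ii) one needs the sharper form~\eqref{eq:strongChernoff}, since the union bound over $\binom{n}{|S|}$ sets is much larger relative to $|S|$ when $|S|\ll s_0$, precisely the bookkeeping tension you noticed); once the edge count is bounded, a deterministic greedy/Tur\'an argument --- at least half the vertices of $S$ have low degree inside $S$, so greedily selecting among them yields an independent set of the claimed size --- finishes both parts. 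The takeaway is that the probability is spent on bounding edges, not on directly producing an independent set, and the independent set then comes for free.
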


Before we move to the proof of the lemma, let us note that the lower bound on $p$ is not used in the proof of part~(i), which becomes a trivial statement if $p<10 \log n/n$. For part~(ii), we could easily relax this bound to $p\ge 1/n^k$ (for any constant $k>0$) by changing some of the constants in the statement. Furthermore, part~(i) is trivially true for $p\ge1/9$, and part (ii) is only non-trivial for all $i$ satisfying $9p2^i<i$.

\remove{Before we move to the proof of the lemma, let us note that the lower bound for $p$ is not used in the proof of the result. It is introduced to make sure the statement is non-trivial. Similarly, the condition for $i$ in part (ii) is to make sure that we claim the existence of an independent set of size larger than 1.}

\begin{proof}
For part (i), let us fix a set $S$ with $|S|=s$ satisfying $s_0 \le s\le n$. Denote by $E_S$ the set of edges induced by $S$. We have $\ex{|E_S|}=\binom{s}{2}p \sim s^2 p/2$. By Chernoff's bound, with probability at least $1-\exp(-\ex{|E_S|}/4)$, we have $|E_S| \leq s^2 p$. By taking a union bound over all $\binom{n}{s}\leq \exp(s \log n)$ subsets of size $s$, with probability at most $\exp(s (\log n - s p/9))$ there exists a set of size $s$ not satisfying the condition. Since, by assumption, $s \geq s_0 = 10 \log n/p$, this probability is at most $\exp(- s \log n / 9)$, and so summing over all $s \ge s_0$, the claim holds a.a.s.\ for all $s$ in the desired range. We may therefore assume that (deterministically) every subset of size $s \ge s_0$ satisfies $|E_S| \leq s^2 p$. Then, for any set $S$ in the desired range, at least $s/2$ vertices of $S$ have a degree of at most $4sp$
in the graph induced by $S$.
By applying a greedy algorithm to these vertices, we can therefore always find an independent set of size at least
$(s/2)/(4sp + 1) \ge 1/(9 p)$. 	

Part (ii) is proved in a similar way. Fix $i = i(n) \in \nat$ such that
\begin{equation}\label{eq:i1}
9p 2^i < i
\end{equation}
(since otherwise the statement is trivial), and a set $S\subseteq V(G)$ of size $s$ satisfying the following: 
\begin{equation}\label{eq:i2}
2^{-i} s_0 \le s < 2^{-i+1} s_0.
\end{equation}

Since $s$ is a natural number, we must have $2^{-i+1} s_0 \ge 1$. Therefore, $2^i \le 2 s_0 \le 2n$ (where we used that $p\ge10\log n/n$), and thus
\begin{equation}\label{eq:i3}
i \le \log(2n)/\log 2 \le 2 \log n,
\end{equation}
for $n$ large enough. Combining~\eqref{eq:i2}, \eqref{eq:i1}, the fact that $ps_0 = 10 \log n$, and~\eqref{eq:i3}, we get
\[
s \ge 2^{-i} s_0 > (9p/i) s_0 =  (90/i) \log n \ge 45.
\]
Note that for $s\ge45$,
\[
\ex{|E_S|}=\binom{s}{2}p \ge \frac {s^2 p}{2.1}.
\]
It follows from the stronger version of Chernoff's bound~\eqref{eq:strongChernoff} that 
\begin{eqnarray*}
\pr \left( |E_S| \ge \left( 1+\frac {2^i}{i} \right) \ex{|E_S|} \right) &=& \exp \left( - \varphi(2^i/i) \ex{|E_S|} \right) \\
&\le& \exp \left( - \frac {2^{i}}{4} \cdot \frac {s^2 p}{2.1} \right) \\
&\le& \exp \left( - \frac {2^{i} s^2 p}{9} \right).
\end{eqnarray*}
(Note that $\varphi(2^i/i) = (1+2^i/i)\log(1+2^i/i)-2^i/i \sim (2^i/i) \log(2^i/i) \sim 2^i \log 2$ as $i \to \infty$. Moreover, it is straightforward to see that for every $i \in \nat$, $\varphi(2^i/i) \ge 2^i/4$.)

As before, by a union bound we get that with probability at most $\exp(s (\log n - 2^i s p/9))$ there exists a set of size $s$ not satisfying the condition. Since, by assumption, $s \geq 2^{-i} s_0 = 2^{-i} 10 \log n/p$, this probability is at most $\exp(- s \log n/9)$,
regardless of which precise interval $[2^{-i}s_0,2^{-i+1}s_0)$ contains $s$.
Summing the $\exp(- s \log n/9)$ bound we obtained over all values $45\le s < s_0$ gives $o(1)$.
Therefore, we may assume that (deterministically), for any $i\in \nat$, every subset $S$ of size $s$ in the range~\eqref{eq:i2} satisfies
$$
|E_S| \leq \left(1+\frac {2^i}{i} \right)  \binom{s}{2}p \le \left(1+\frac {2^i}{i} \right) \frac {s^2 p}{2}    \le 2^i s^2 p / i.
$$
Arguing as before, this guarantees that we can always find an independent set of size at least
\[
(s/2)/(4 \cdot 2^i sp/i + 1) \ge i/(9 \cdot 2^i p).
\]
(At the last step, we used that $4 \cdot 2^i sp/i + 1 \le 4.2 \cdot 2^i sp/i$, which follows easily from~\eqref{eq:i2}, \eqref{eq:i3} and the definition of $s_0$.)
The proof of the lemma is finished.
\end{proof}
\section{Proof of Theorem~\ref{thm:main}}

The lower bound follows immediately from~\eqref{eq:chi}. For the upper bound, we give a winning strategy for Mrs.\ Correct that a.a.s.\ requires only $(1+o(1)) n / (2 \log_b (np))$ erasers on each vertex, where $b = 1/(1-p)$. (Recall from~\eqref{eq:chi2} that a.a.s.\ $\chi(G) \sim n / (2 \log_b (np))$.)
We emphasize that most probabilistic statements hereafter in the proof will not refer to $\sG(n,p)$ but rather to the randomized strategy that Mrs.\ Correct uses to select each independent set, regardless of the strategy of Mr.\ Paint and assuming that $G$ deterministically satisfies the conclusions of Corollary~\ref{cor:largesets} and Lemma~\ref{lem:mediumsets}.
Since $p \gg (\log \log n)^{1/3} (\log n)^2 n^{-1/3}$, we can choose a function $\omega=o(\log n)$ tending to infinity with $n$ and such that $p \ge \omega (\log \log n)^{1/3} (\log n)^2 n^{-1/3}$. Note that our choice of $\omega$ satisfies the requirements of Corollary~\ref{cor:largesets}. 
Call a set $S \subseteq V(G)$ \textbf{large} if $|S|=s \geq n/(\omega \log^2 n)$, \textbf{small} if $s \leq np/(\omega \log^2 n)$, and \textbf{medium} otherwise.

\medskip

Whenever Mrs.\ Correct is presented a large set, she can, by Corollary~\ref{cor:largesets} find an independent set of size $k_0=(2+o(1)) \log_b (np)$, and uses erasers for all remaining vertices. Note that, trivially, at most 
\begin{equation}\label{eq:large}
\frac {n}{(2+o(1)) \log_b (np)} = (1+o(1)) \frac {n}{2 \log_b (np)}
\end{equation}
large sets can be presented to her before the end of the game, and hence, for all large sets at most that many erasers on each vertex are needed.

\medskip

Suppose now that a small set $S$ is presented to Mrs.\ Correct. Then, she chooses a random vertex $v \in S$ and accepts the colour on that vertex; for all other vertices of the presented set erasers are used. (Note that this is clearly not a optimal strategy; Mrs.\ Correct could extend $\{v\}$ to a maximal independent set but we do not use it in the argument and so we may pretend that a single vertex is accepted.)  Let $X_v$ denote the random variable counting the number of erasers used when small sets containing $v$ are presented before eventually $v$ gets a permanent colour (which does not necessarily happen when a small set is presented). We have
\begin{equation}\label{eq:smallset}
\pr \left( X_v \geq \frac {np}{\sqrt{\omega} \log n} \right) \leq \left(1- \frac{\omega \log^2 n}{np}\right)^{np/( \sqrt{\omega}\log n)} \leq \exp(-\sqrt{\omega}\log n) =o(n^{-1}),
\end{equation}
and thus, by a union bound, a.a.s.\ for all vertices the number of erasers used for small sets is at most 
\begin{equation}\label{eq:small}
\frac {np}{\sqrt{\omega}\log n} =o \left( \frac{n}{2 \log_b (np)} \right),
\end{equation}
and so is negligible.

\medskip

Now, we are going to deal with medium sets. First, note that  the size of a medium set is at least $np/(\omega \log^2 n) \ge 10 \log n / p$ for the range of $p$ considered in this theorem and by our choice of $\omega$. Suppose that some medium set $S$ is presented during the game. Applying Lemma~\ref{lem:mediumsets} repeatedly, Mrs.\ Correct can partition $S$ into independent sets of size $\lceil 1/(9p) \rceil$ and a remaining set $J$ of size at most $10 \log n / p \le np/(\omega \log^2 n)$. The strategy is then the following: with probability $1/2$ she chooses (uniformly at random) one of the independent sets of size $\lceil 1/(9p) \rceil$, and with probability $1/2$ she chooses one vertex chosen uniformly at random from $J$ (as before, this is clearly a suboptimal strategy but convenient to analyze). Selected vertices keep the colour; for the others erasers need to be used. We partition the vertices of $S$ into two groups: group $1$ consists of vertices belonging to independent sets, and group $2$ consists of vertices of $J$. Our goal is to show that each vertex $v$ appears in less than $3np/(\sqrt{\omega}\log n)$ many medium sets, so that the total number of erasers needed to deal with these situations is negligible (see~\eqref{eq:small}). 

Suppose that some vertex $v$ appears in at least  $3np/(\sqrt{\omega}\log n)$ medium sets. Each time, the corresponding medium set is split into two groups, and we cannot control in which group $v$ ends up. However, by Chernoff's bound, with probability $1-o(n^{-1})$, at least $np/(\sqrt{\omega}\log n)$ times the group to which $v$ belongs  is chosen. We condition on this event. Note that if group $2$ is chosen and $v$ belongs to this group, the probability that $v$ is not selected is at most $1- \omega \log^2 n/(np)$ (as in~\eqref{eq:smallset} for small sets). Similarly, if group $1$ is chosen and $v$ belongs to this group, the probability that $v$ is not chosen, is at most $1 - \omega \log^2 n/(9np),$ as each medium set has size at most $n / (\omega \log^2 n)$.

Denote by $Y_v$ the random variable counting the number of erasers used for vertex $v$ corresponding to medium sets, before eventually $v$ gets a permanent colour. As in~\eqref{eq:smallset}, 
\begin{eqnarray*}
\pr \left(Y_v \geq \frac {3np}{\sqrt{\omega} \log n} \right) &\leq& o(n^{-1}) + \left(1- \frac{\omega \log^2 n}{9np}\right)^{(np)/( \sqrt{\omega}\log n)} \\
&\leq& o(n^{-1}) + \exp \left(- \frac {\sqrt{\omega}\log n}{9} \right)=o(n^{-1}),
\end{eqnarray*}
and thus, as before, by a union bound, a.a.s.\ the desired bound for the number of erasers used for medium sets holds for all vertices. 

Combining bounds for the number of erasers used for large, medium, and small sets, we get that, regardless of the strategy used by Mr.\ Paint, Mrs.\ Correct uses at most $(1+o(1)) n/ (2 \log_b (np))$ erasers for each vertex, and the theorem follows.

\section{Proof of Theorem~\ref{thm:main2}}

As in the proof of Theorem~\ref{thm:main}, the lower bound follows immediately from~\eqref{eq:chi}, and so it remains to show that Mrs.\ Correct has a strategy that a.a.s.\ requires only $O(n / \log_b (np))$ erasers on each vertex (recall also~\eqref{eq:chi2}). We will use the same definitions for sets of being small, medium, and large as before, but we set here $\omega=\log \log n$; as pointed out right after Lemma~\ref{lem:largesets2}, other choices of $\omega$ are possible, but our choice gives the strongest result (as we assume the weakest condition). The argument for small sets remains exactly the same; a.a.s.\ it is enough to have $o(n / \log_b (np))$ erasers on each vertex to deal with all small sets that Mr.\ Paint may present. To deal with large sets, by Lemma~\ref{lem:largesets2}, since we aim for a  statement that holds a.a.s., we may assume that whenever a large set is presented, Mrs.\ Correct can always find an independent set of size at least $\eps(1-\eps) \log (np)/(3p)$, keeps the colour on these vertices, and uses erasers for all remaining vertices. The number of large sets presented is therefore at most $3 (\eps(1-\eps))^{-1}  np / \log(np) = O(n / \log_b (np))$, as needed. This is enough to determine the order of the on-line choice number but, if one aims for a better constant, then Remark~\ref{rem:densecase} implies that for $np = \log^{C+o(1)} n$ we are guaranteed to have an independent set of size $i=i(C)$, where
$$
i = (1+o(1))
\begin{cases}
\log (np)/p  & \text{if } C \to \infty \\
(1-\frac {2}{C}) \log (np)/p  & \text{if } C \in [4, \infty) \\
\frac {1}{2} \log (np)/p  & \text{if } C \in (2,4).
\end{cases}
$$
We will show below that the contribution of medium sets is of negligible order, and hence the upper bound on the number of erasers needed for large sets will also apply (up to lower order terms) to the total number of erasers needed. As a result, we will get the following bounds: $(2+o(1) \chi(G)$ for $C \to \infty$, $(\frac {2C}{C-2} + o(1)) \chi(G)$ for $C \in [4, \infty)$, and $(4+o(1)) \chi(G)$ for $C \in (2, 4)$.

The strategy for medium sets has to be substantially modified. Suppose that a medium set $S$ of size $s$ is presented at some point of the game. Recall that $np/(\omega \log^2 n) < s < n/(\omega \log^2 n)$, where $\omega=\log\log n$. Since we aim for a statement that holds a.a.s., we may assume that Mrs.\ Correct can partition $S$ in the following way: by applying Lemma~\ref{lem:mediumsets}(i) repeatedly, as long as at least $s_0 = 10 \log n/p$ vertices are remaining, she can find independent sets of size $\lceil 1/(9p) \rceil$, and remove them from $S$. (Note that for sparse graphs it might happen that $s < s_0$ and so the lemma cannot be applied even once. In such a situation, we simply move on to the next step.) If the number of remaining vertices, $r$, satisfies $2^{-i} s_0 \leq r < 2^{-i+1} s_0$ for some $i = i(n) \in \mathbb{N}$ and $r > np/(\omega \log^2 n)$, then by Lemma~\ref{lem:mediumsets}(ii), she can find an independent set of size $\lceil i/(9p2^{i}) \rceil$. Then, she removes that independent set, and continues iteratively with the remaining vertices in $S$. Note that there are clearly $M \le \log n / \log 2 =O(\log n)$ different sizes of independent sets corresponding to different values of $i$. Finally, if $r \leq np/(\omega \log^2 n)$, she puts all the remaining vertices into a set $J$ (not necessarily independent), and stops partitioning.  

We classify vertices in $S$ into types depending on the size of the set in the partition of $S$ to which they belong:
more precisely, we call vertices from independent sets of size $\lceil 1/(9p) \rceil$ to be of \textbf{type-$0$}; vertices from independent sets of size $\lceil i/(9p2^{i}) \rceil$ for some $1 \leq i \leq M$ are called of \textbf{type-$i$}; and vertices from the last set of vertices $J$ of size at most $np/(\omega \log^2 n)$ are called of \textbf{type-$(M+1)$}. Of course, these definitions depend on the particular time a set $S$ is presented by Mr.\ Paint. In particular,
if a vertex is presented several times by Mr.\ Paint, each time it might be of a different type. Finally, we classify vertices in $S$ into 3 groups: vertices of type-0 form \textbf{group-$0$}, vertices of type-$1$ up to type-$M$ form \textbf{group-$1$}, and vertices of type-$(M+1)$ form \textbf{group-$2$}.

Mrs.\ Correct now chooses with probability $1/3$ one of the three groups. If group-$0$ is selected, she then picks uniformly at random one independent set within the group. In case when group-$2$ is selected, she picks uniformly at random a single vertex inside this group. Finally, if group-$1$ is selected, she first picks a random type. The probability that type-$i$ is selected is equal to
$$
q_i := \frac {1/i}{\sum_{j=1}^{M} 1/j} \ge \frac {1/i}{\sum_{j=1}^{\log n / \log 2} 1/j} = \frac {1+o(1)}{i \log \log n}.
$$
Then, within the selected type, an independent set is selected uniformly at random. If the group or type chosen by Mrs.\ Correct has no vertices in it, she picks no vertex to colour permanently, which is clearly a suboptimal strategy.
Our goal is to show that a.a.s.\ each vertex $v$ appears in less than
$$
\frac {4np}{\sqrt{\omega}\log (np)} = o \left( \frac{n}{\log_b (np)} \right)=o(\chi(G))
$$ 
many medium sets, before its colour is accepted. If this holds, then the number of erasers needed for each vertex (due to medium sets) is negligible.

Suppose that some vertex $v$ appears in at least $4np/(\sqrt{\omega}\log (np))$ medium sets. Since one of the three groups is selected uniformly at random for each medium set, we expect $v$ to belong to the selected group at least $(4/3)np/(\sqrt{\omega}\log (np)) \ge \log^{2+\eps+o(1)} n$ times. Hence, it follows from Chernoff's bound that, with probability $1-o(n^{-1})$, at least $np/(\sqrt{\omega}\log (np))$ times the group to which $v$ belongs is chosen. Call this event $E$. It remains to show that, conditional on $E$, $v$ will be permanently coloured  with probability $1-o(n^{-1})$ within the first $np/(\sqrt{\omega}\log (np))$ times its group is picked for some medium set.

Note that if group-$0$ is selected and $v$ belongs to this group, the probability that $v$ is chosen to be permanently coloured, is at least
\begin{equation}\label{eq:g0}
 \frac{1/(9p)}{n/(\omega \log^2 n)}  =  \frac{\omega \log^2 n}{9np}.
\end{equation}
Suppose then that group-$1$ is chosen, $v$ belongs to this group and is of type-$i$ for some $1 \leq i \leq M$.
The number of independent sets in the partition containing vertices of type-$i$ is at most 
\[
\frac{2^{-i+1}s_0/2}{i/(9p2^i)} + 1 = \frac{9s_0p}{i} + 1 = \frac{90\log n}{i} + 1 \le \frac{100\log n}{i},
\]
where for the last step we used that $i\le M\le \log n/\log 2$.  This time, the probability that $v$ is permanently coloured is at least
\begin{equation}\label{eq:g1}
q_i \frac{i}{100\log n} \ge \frac{1+o(1)}{100(\log n)(\log\log n)}   \ge \frac{\omega \log(np) \log n}{np},
\end{equation}
where we used that $\omega=\log\log n$ and $np/(\log(np)) \ge \log^{2+\eps/2}n$. Finally, if group-$2$ is chosen and $v$ belongs to this group, the probability that $v$ is permanently coloured is at least
\begin{equation}\label{eq:g2}
\frac{1}{np / (\omega \log^2 n)} = \frac{\omega \log^2 n}{np}.
\end{equation}
Summarizing~\eqref{eq:g0}, \eqref{eq:g1} and~\eqref{eq:g2}, whenever the group of $v$ is selected (regardless of which group it is), the probability that $v$ is chosen to be permanently coloured is at least
$$
\frac{\omega \log(np) \log n}{9np}.
$$

The rest of the argument works as before. Given a vertex $v$ which is presented in at least $4np/(\sqrt{\omega}\log (np))$ medium sets, denote by $Y_v$ the random variable counting the number of erasers used by $v$ (due to medium sets) before $v$ gets a permanent colour. Then,
 \begin{eqnarray*}
\pr \left( Y_v \geq \frac { 4 np}{\sqrt{\omega} \log (np)} \; \Big| \;  E \right) &\leq&  \left(1-     \frac{\omega \log(np) \log n}{9np}        \right)^{np/( \sqrt{\omega}\log (np))} \\
&\le&  \exp \left(- \Omega \left( 
  \sqrt\omega  \log n  \right) \right)
= o(n^{-1}).
\end{eqnarray*}
Since $\pr(E)=1-o(n^{-1})$,
the unconditional probability that $Y_v \ge 4 np/(\sqrt{\omega} \log (np))$ is also $o(n^{-1})$.  Hence, regardless of the strategy followed by Mr.\ Paint, we can take a union bound over all vertices that were presented in at least $4 np/(\sqrt{\omega} \log (np))$ medium sets, and deduce that a.a.s.\ for every vertex the number of erasers used (due to medium sets) is less than $4 np/(\sqrt{\omega} \log (np))$ and thus negligible.
The proof of the theorem is finished.

\section{Proof of Theorem~\ref{thm:main3}}

Before we move to the proof of Theorem~\ref{thm:main3}, let us state the following simple observation.

\begin{lem}\label{lem:trees}
Let $G$ be a graph whose components are all trees or unicyclic graphs. Then Mrs.\ Correct has a winning strategy using $1$ eraser at each vertex for tree components and using $2$ erasers at each vertex for unicyclic components. 
\end{lem}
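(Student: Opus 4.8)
The plan is to give an explicit strategy for Mrs.\ Correct that handles each connected component separately, since the game on a disconnected graph decomposes into independent games on the components. So it suffices to describe a winning strategy on a single tree (using $1$ eraser per vertex, i.e.\ $2$-paintability) and on a single unicyclic graph (using $2$ erasers per vertex, i.e.\ $3$-paintability). The key structural fact I would exploit is that a tree always has a vertex of degree at most $1$ (a leaf), and more generally that trees and unicyclic graphs can be peeled by repeatedly removing low-degree vertices; equivalently, one can use the standard fact that forests are $2$-paintable and graphs with at most one cycle per component are $3$-paintable, but here I want a self-contained argument matching the eraser bookkeeping.

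First I would treat the tree case. Root each tree component at an arbitrary vertex $r$. When Mr.\ Paint presents a set $X$ (the intersection of his painted set with the current component), Mrs.\ Correct keeps the colour on a vertex $v \in X$ precisely when $v$ has no ancestor in $X$ that is kept — concretely, keep the colour on every $v\in X$ whose parent is not in $X$, and among a maximal downward chain inside $X$ keep only the topmost vertex; erase on the rest. The point is that each vertex $v$ other than the root can have its colour erased at most once before being permanently coloured: the only way $v$ is erased is that its parent $u$ was simultaneously presented and kept, but then $u$ is permanently coloured and removed, so at the next round $v$ has become (or is closer to being) a root of its remaining subtree; a clean way to see the ``at most once'' bound is to order vertices by depth and observe that once a vertex's parent is permanently removed it is never erased again, and a vertex is erased only when its parent is present, so the number of erasures at $v$ is at most the number of distinct ancestors permanently coloured while $v$ is still active, which is at most one because as soon as $v$'s parent is coloured, $v$'s component-of-remaining-graph has $v$ as a root. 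Hence $1$ eraser per vertex suffices. (The root itself is never erased, since it has no ancestor, so it is fine.)

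For the unicyclic case, let $C$ be the unique cycle; pick one edge $e=xy$ of $C$ and let $T = G - e$, which is a tree. Run essentially the tree strategy on $T$, rooted so that $x$ is the root (so $y$ is a tree-vertex with its tree-parent not equal to $x$). The extra edge $e$ is the only edge not accounted for, and it can cause at most one additional forced erasure at one of its endpoints — specifically, the tree strategy might keep colour on both $x$ and $y$ when both are presented, which is illegal only across the edge $e$; in that single event Mrs.\ Correct spends one extra eraser (on $y$, say). So the tree budget of $1$ plus $1$ extra gives $2$ erasers per vertex, which is what is claimed; more carefully, I would argue that after this single extra erasure (or after $x$ or $y$ is permanently coloured, whichever happens first) the remaining graph in that component is a forest, and the plain tree argument with the leftover $1$ eraser finishes the job. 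The main obstacle — and the step I'd want to write out carefully — is the bookkeeping showing the ``at most once'' erasure bound for the tree strategy is robust to the fact that the remaining graph changes every round (vertices getting permanently coloured and removed can turn internal vertices into roots of new subtrees); handling the cycle edge is comparatively easy once the tree case is airtight. A cleaner alternative, which I might use instead, is simply to invoke the known results that forests are $2$-paintable and that adding one edge to each tree component raises paintability by at most $1$, but proving the latter inline is essentially the same cycle-edge argument just sketched.
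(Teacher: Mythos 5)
Your tree strategy, as written, does not achieve the one-eraser bound, and the flaw sits exactly in the step you flagged as needing care. Your rule keeps the colour on $v\in X$ when $v$'s parent is not in $X$ (equivalently, keeps only the topmost vertex of each maximal downward chain of $X$, or, in your first formulation, when $v$ has no kept ancestor in $X$), and your accounting asserts that $v$ is erased only when its parent is simultaneously presented \emph{and kept}. Under your rule this is false: $v$ is erased whenever its parent is merely presented (or some non-adjacent ancestor is kept), even if the parent itself is erased, and then nothing adjacent to $v$ has been permanently removed, so the same erasure can be forced again. Concretely, take the path $a$--$b$--$c$ rooted at $a$. Mr.\ Paint presents $\{a,b,c\}$: you keep $a$ and erase both $b$ and $c$. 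He then presents $\{b,c\}$: now $a$ is gone, so you keep $b$ and must erase $c$ a second time, although $c$ has a single eraser; Mr.\ Paint wins. The repair is to make the decision depend on what is \emph{kept}, not on what is presented: process the presented set top-down and keep $v$ if and only if its parent is not kept in the current round (in the example, $c$ is then kept in the first round, which is legal since $a$ and $c$ are not adjacent). With this rule the kept set is still independent, because every tree edge joins a parent and a child, and a vertex is erased only in the round in which its parent is permanently coloured, which happens at most once, giving the one-eraser bound.

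Once the tree case is repaired, your unicyclic reduction is sound: deleting a cycle edge $e=xy$, playing the (corrected) tree strategy on $G-e$, and charging the single possible conflict across $e$ --- it can only occur in the round in which $x$ is permanently coloured, hence at most once --- one extra eraser at $y$ yields the bound of $2$. This is a genuinely different route from the paper, which avoids any explicit rooted greedy rule by arguing from a minimal counterexample: for trees one peels a leaf $\ell$, plays the winning strategy on $T\setminus\ell$, and notes that $\ell$ is erased only when its unique neighbour is accepted (hence at most once); for unicyclic components one peels leaves when they exist, and if the component is a bare cycle, Mrs.\ Correct accepts one vertex of the first presented set, spends one eraser on the others, and finishes on the resulting path with the tree bound. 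The paper's induction quietly handles the bookkeeping that your explicit strategy must get right by hand; your approach, once the keep-rule is stated correctly, has the advantage of being a concrete, non-recursive strategy.
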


\begin{proof}
Since we may consider different components separately, we may assume that $G$ is connected. First assume that $G$ is a tree. For a contradiction, suppose that $1$ eraser at each vertex is not enough for Mrs.\ Correct to win on some tree, and consider a smallest such tree $T$. Clearly, $|T| \geq 2$, and let us consider a leaf $\ell$ of $T$. By minimality of $T$, Mrs.\ Correct has a winning strategy on $T \setminus \ell$. Then, she extends the strategy to $T$ as follows. The first time she is presented vertex $\ell$, she considers the optimal strategy when playing on the restriction of the set to $T \setminus \ell$. If the set yielded by that strategy does not contain the neighbour of $\ell$ in $T$, she follows this strategy and simply adds $\ell$ to the set, and plays for the rest of the game on $T \setminus \ell$. On the other hand, if the set does contain the neighbour of $\ell$, then she also follows the strategy on $T \setminus \ell$, but uses the eraser for $\ell$. Since the only neighbour of $\ell$ cannot appear later on, she can continue with her optimal strategy on $T \setminus \ell$ and simply adds $\ell$ the second time she is presented this vertex. We get a contradiction, and the proof of the first part is finished.

Similarly, suppose now that $G$ is unicyclic. As before, for a contradiction, suppose that $2$ erasers at each vertex are not enough for $G$, and consider a smallest unicyclic graph $U$ with this property. Clearly, $|U| \geq 3$. If $U$ contains a leaf $\ell$, then, as before, she plays optimally using two erasers on $U \setminus \ell$ and adapts her strategy exactly as before. If $U$ does not contain a leaf, then $U$ is a cycle. The first time she is presented a set, she picks one vertex and uses erasers for all other vertices. The rest of the game is played on a tree (in fact, a path), and she has still $1$ eraser at each vertex at her disposal. By the first part of the lemma, she has a winning strategy and the proof is finished.  
\end{proof}

We come now back to the proof of Theorem~\ref{thm:main3}. It is well known that if $p < 0.99/n$, then a.a.s.\ $G$ contains only trees and unicyclic components, and Mrs.\ Correct can win using at most $2$ erasers by Lemma~\ref{lem:trees}.  Assume then that $p\le c/n$ for some (perhaps large) constant $c \ge 0.99$ (assume w.l.o.g.\ that $c\in\nat$). If $G$ contains a component with at least two cycles, the component must contain a subgraph which either consists of two  cycles connected by a path (or sharing a vertex), or is a cycle with a ``diagonal'' path. Let us call such structures \textbf{complex}. Clearly, on $k$ vertices one can construct at most $k^2 k!$ complex structures. Note that the degree of any vertex given by the existence of the complex structure is at most $4$. We will show that a.a.s.\ there is no complex structure with the property that each vertex of this structure has a degree in the whole graph of  at least $100c^2$. Note that, given a complex structure, in order for each vertex of this structure to have degree in the whole graph at least $100c^2$, it must have either at least $47c^2$ incident edges towards vertices outside the subgraph on which the complex structure is built, or it has at least $47c^2$ additional incident edges inside the subgraph on which the complex structure is built. Therefore, either half of the vertices of the complex structure have at least $47c^2$ incident edges outside, or there are at least $11c^2k$ additional edges inside the complex structure, and no information about these edges has been revealed so far.  Thus, the expected number of complex structures in which each vertex has degree in the whole graph at least $100c^2$ in the whole graph is at most 
\begin{align}
\sum_{k=4}^n {n \choose k} k^2 k! p^{k+1} &\left( \binom{k}{\lfloor k/2 \rfloor} \left(  {n \choose 47c^2} p^{47c^2}\right)^{\lfloor k/2 \rfloor} + \binom{\binom{k}{2}} {11c^2k}p^{11c^2k}\right) \nonumber \\
& \leq \sum_{k=4}^n \frac{k^2}{n}c^{k+1} 2\left(\frac{e}{22 c}\right)^{11c^2k} = O\left( \sum_{k=4}^n \frac {k^2}{n} c^{k+1} c^{-2k} \right) \nonumber \\
&= O\left( \sum_{k=4}^n \frac {k^2}{n} c^{-k} \right) = O\left( \int_{x=0}^{\infty} \frac {x^2}{n}c^{-x} dx \right) = O(1/n),
\label{eq:densegraph}
\end{align}
and therefore, by Markov's inequality, the subgraph induced by vertices of degree at least $100c^2$ a.a.s.\ consists of components that are either trees or unicyclic components. Since we aim for a statement that holds a.a.s., we may assume that this is the case.

We may assume that each time Mrs.\ Correct selects a maximal independent set. Hence, the number of erasers needed to be placed at vertex $v$ is at most $\deg(v)$ (if $v$ uses one of its erasers at some point of the game, it must be the case that at least one of its neighbours belongs to the maximal independent set). As a result, vertices of degree at most $100c^2$ require only a constant number of erasers. Call the set of such vertices $L$, and let $H=V(G)\setminus L$.  By~\eqref{eq:densegraph}, the graph induced by the vertices in $H$ a.a.s.\ consists of components that are either trees or unicyclic components. Mrs.\ Correct plays as follows: whenever she is presented a set $U$, she plays optimally on the restriction of $U$ to $H$ (on which, by Lemma~\ref{lem:trees} she uses at most 2 erasers), and then she extends an independent set found there to any maximal independent set in $U$. This strategy uses in total at most $100c^2=O(1)$ erasers for each vertex in $L$. In this way, clearly $O(1)=\Theta(\chi(G))$ colours are used.

\end{document}